\newcommand{\ga}{\alpha}
\newcommand{\gb}{\beta}
\renewcommand{\gg}{\gamma}
\newcommand{\gd}{\delta}
\newcommand{\gw}{\omega}
\newcommand{\gS}{\Sigma}
\newcommand{\gs}{\sigma}
\newcommand{\dom}{\mathrm{dom}}
\newcommand{\namba}{N}
\newtheorem{theorem}{Theorem}[section]
\newtheorem{claim}[theorem]{Claim}
\theoremstyle{definition}
\title{Bounded Namba forcing axiom may fail\footnote{2000 AMS subject classification 03E57.}}
\author{
Jind{\v r}ich Zapletal
\thanks{The authors was partially supported by NSF grant  DMS 1161078.}
\\University of Florida
}
\begin{document}
\maketitle

\begin{abstract}
I show that in a $\gs$-closed forcing extension, the bounded forcing axiom for Namba forcing fails. This answers a question of Justin Tatch Moore.
\end{abstract}

\section{Introduction}

 Ronald Jensen proved \cite{jensen:subcomplete} that Namba forcing and certain variations of it can be iterated without adding reals. It follows that the forcing axiom for the Namba forcing is consistent with the Continuum Hypothesis. During Jensen's talk at Oberwolfach set theory meeting in February 2017 on this result, Justin Tatch Moore asked whether it is possible that the Continuum Hypothesis (possibly under a large cardinal assumption) outright implies the forcing axiom or the bounded forcing axiom for Namba forcing. The question proved to be a surprisingly tough nut to crack. The purpose of this note is to provide a complete negative answer to the question:

\begin{theorem}
Assume the Continuum Hypothesis. Then in a $\gs$-closed $\aleph_2$-preserving extension the bounded forcing axiom for Namba Forcing fails.
\end{theorem}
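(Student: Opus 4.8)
The plan is to exploit Bagaria's characterization of bounded forcing axioms: in $V[H]$, $\mathrm{BFA}(\namba)$ is equivalent to $H_{\gw_2}^{V[H]} \prec_{\Sigma_1} V[H][G]$ for $G\subseteq\namba$ generic, with parameters ranging over $H_{\gw_2}^{V[H]}$. Since $\Sigma_1$ statements are upward absolute, the real content is reflection, so to refute the axiom it suffices to exhibit a parameter $A\in H_{\gw_2}^{V[H]}$ and a $\Sigma_1$ formula $\gf(A)$ with some $\namba$-condition forcing $\gf(A)$ while $H_{\gw_2}^{V[H]}\models\neg\gf(A)$. The engine is the single irreversible effect of Namba forcing: it adjoins a cofinal map $\langle\ga_n : n<\gw\rangle$ from $\gw$ into $\gw_2$, whereas $V[H]$, being a $\gw_2$-preserving extension of a model of CH, still satisfies $\mathrm{cf}(\gw_2)=\gw_2$.

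The obstacle is that ``$\gw_2$ has countable cofinality'' is not $\Sigma_1$ over $H_{\gw_2}$, since $\gw_2\notin H_{\gw_2}$, so I would first convert singularization into a statement about $\gw_1$-indexed data. Working in $V[H]$, fix a $<^*$-increasing, $<^*$-cofinal scale $\langle x_\ga : \ga<\gw_2\rangle$ in $(\gw_1^{\gw_1}, <^*)$, domination modulo countable sets; no such scale can live in $\baire$ under CH, which is why the functions are taken on $\gw_1$, and the scale is arranged to be added by the forcing $P$ introduced below. The decisive elementary fact is that every $<^*$-increasing $\gw$-sequence in $\gw_1^{\gw_1}$ is bounded, by its coordinatewise supremum, which remains below $\gw_1$. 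Thus a cofinal $\langle\ga_n\rangle$ yields $g(\xi)=\sup_n x_{\ga_n}(\xi)+1$, a function lying $<^*$-above the entire scale; since the scale is cofinal in $V[H]$, this $g$ cannot belong to $V[H]$. This is the cofinality phenomenon I want to detect.

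To make the phenomenon $\Sigma_1$ over $H_{\gw_2}$, I would have $P$ add, alongside the scale, an oscillation coding and a parameter $A\subseteq\gw_1$ so that a single countable object --- essentially the oscillation trace $\osc$ (that is, $\trace$) of $\langle x_{\ga_n}\rangle$ along a branch, presented as an element of $\growing$ assembled from its $\incr$-approximations --- certifies the existence of such a bounding $g$; the formula $\gf(A)$ then asserts the existence of a countable witness of this certified shape. The coding is to be diagonalized stage by stage in the construction of $P$: using CH to enumerate, for each $\gb<\gw_2$, the $\aleph_1$ many trace patterns realizable from $\langle x_\ga : \ga<\gb\rangle$, I steer $A$ away from all of them, so that no $\gw$-sequence bounded in $\gw_2$ --- hence no $\gw$-sequence available in $V[H]$ --- realizes the pattern pinned by $A$, giving $H_{\gw_2}^{V[H]}\models\neg\gf(A)$. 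That $\namba$ forces $\gf(A)$ should come from a density argument exploiting the $\gw_2$-splitting of Namba trees: below any condition the generic branch can be steered so that its trace realizes the prescribed pattern.

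It remains to verify that $P$ is as advertised. The $\gs$-closure is immediate, conditions being countable and closed under unions of countable decreasing chains. The $\gw_2$-preservation, which is the crux, I would derive from the $\aleph_2$-chain condition: by CH the conditions form a set of size $\gw_2$ of countable objects with $\gw_1^{\anought}=\gw_1$, so among any $\gw_2$ conditions a $\Delta$-system argument produces $\gw_2$ many whose domains share a countable root on which the relevant finitely many values agree, and the locality of the diagonalizing side conditions renders these pairwise compatible; $\gs$-closed together with $\aleph_2$-cc preserves all cardinals. The main obstacle I anticipate is fitting these demands together: the coding must stay local enough to keep $P$ $\aleph_2$-preserving, yet be rich enough that the global, inherently $\gw_2$-level cofinality fact becomes locally $\Sigma_1$-certifiable and, simultaneously, that the Namba generic can be densely steered to meet the pattern surviving the diagonalization. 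Reconciling this steering on the $\namba$ side with the diagonalization that kills all $V[H]$ witnesses is where I expect the argument to be genuinely delicate.
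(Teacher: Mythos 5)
You have correctly isolated the high-level shape of the argument---a $\gs$-closed, $\aleph_2$-c.c.\ (hence $\aleph_2$-preserving, using CH) forcing $P$ that adds a parameter $A\in H_{\aleph_2}$ together with a $\gS_1$ formula $\gf(A)$ that Namba forcing makes true but that has no witness in $V[H]$---and your peripheral observations (coordinatewise suprema bound $<^*$-increasing $\gw$-sequences in $\gw_1^{\gw_1}$; $\gs$-closed plus $\aleph_2$-c.c.\ via a CH $\Delta$-system preserves cardinals) are fine. But the proposal is a strategy outline, not a proof: the two load-bearing objects, the formula $\gf(A)$ and the ``oscillation coding,'' are never defined, and the difficulty there is not cosmetic. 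The natural statement ``there is $g$ lying $<^*$-above every $x_\ga$'' quantifies over the entire scale, which has size $\aleph_2$ and is therefore \emph{not} an element of $H_{\aleph_2}$; nothing in your sketch explains how a single countable trace certifies domination of all $\aleph_2$ members at once. The paper performs exactly this compression with a concrete device: a generic tree $U\subseteq 2^{<\gw_1}$ (size $\aleph_1$, hence a legitimate parameter) with branches $b_\gd$, $\gd\in\gw_2$, and the $\gS_1$ assertion ``some infinite set of branches of $U$ is not eventually dissonant,'' witnessed by the countable set $\{b_{\dot x(n)}\colon n\in\gw\}$ along the Namba generic. Note that the paper also needs the $a_p$ coordinate to \emph{seal} the branches of $U$ (every branch is some $b_\gd$); without an analogous sealing mechanism in your coding, a $\gS_1$ witness in $V[H]$ could be degenerate---a countable object of the ``certified shape'' not arising from any sequence of scale indices---so $H_{\aleph_2}^{V[H]}\models\neg\gf(A)$ is not secured even granting your diagonalization.

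The deeper gap is on the Namba side, which you dispose of with ``a density argument exploiting the $\gw_2$-splitting'' and then, candidly, flag as the place you expect delicacy. That unresolved tension \emph{is} the theorem: the diagonalization must kill every pattern realizable from data available in $V[H]$, while Namba must densely realize the surviving pattern, and these demands pull in opposite directions. The paper resolves this with genuine work (Claim~\ref{iiclaim}): for each $\gb$ it plays a game $H_\gb$ that is closed for Player II, hence determined; if II wins for some $\gb$, one extracts a Namba subtree forcing consonance at $\gb$, a contradiction; otherwise Player I's $\aleph_1$ many winning strategies are compounded into a single one, pulled into the \emph{ground model} by the $\aleph_2$-c.c.\ of $P$, and then defeated via a master condition over a countable elementary submodel $M$: the counterplay $\langle\gd_n\colon n\in\gw\rangle$ is arranged cofinal in $M\cap\gw_2$, so each diagonalizing set in $b_q$ (being in $M$, hence bounded in $M\cap\gw_2$) meets $\{\gd_n\colon n\in\gw\}$ only finitely---which is precisely why the new level of $f$ can be set to $0$ on all $\gd_n$ while honoring the dissonance side conditions. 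In other words, the Namba generic's range escapes the diagonalization because it is cofinal in $M\cap\gw_2$ while every set the diagonalization constrains is bounded there; this is the mechanism that lets steering and killing coexist. Your proposal contains no analogue of the determinacy argument, the strategy-absorption via $\aleph_2$-c.c., or the cofinal-versus-bounded escape, so its central claim---that $\namba$ forces $\gf(A)$ while $V[H]$ has no witness---remains unsupported.
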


The arguments are identical for all common versions of Namba forcing. For definiteness, I will use the ``Laver-style'' version. The \emph{Namba forcing} $\namba$ is the set of all trees $T\subset \gw_2^{<\gw}$ which contain an element $t$ (the \emph{trunk}) such that every node on $T$ is compatible with $t$ and every element of $T$ extending $t$ has $\aleph_2$ many immediate successors in $T$. The ordering is that of inclusion. Thus, the Namba forcing adds an $\gw$-sequence of ordinals cofinal in $\gw_2^V$; below, the Namba name for it will be denoted by $\dot x$.

Recall that if $P$ is a poset then the \emph{bounded forcing axiom for $P$} \cite{bagaria:bounded, shelah:bounded} is the statement that the structure $\langle H_{\aleph_2}, \in, \gw_1\rangle$ of the ground model is forced to be a $\gS_1$-elementary submodel of $\langle H_{\aleph_2}, \in, \gw_1\rangle$ of the $P$-extension. There are many equivalent restatements of this condition; under the Continuum Hypothesis, it is equivalent to the statement that $P$ does not add any branches to trees of height and width $\gw_1$ which have no branches in the ground model.

The notation used in this note follows the set theory standard of \cite{jech:newset}.

\section{The proof}

I will start with a piece of terminology. Let $U\subset 2^{<\gw_1}$ be a tree. A set $a$ of branches of $U$ is \emph{consonant} at $\gb\in\gw_1$ if for every $b\in a$, $b(\gb)=0$; otherwise, the set is \emph{dissonant} at $\gb$. The set $a$ is \emph{eventually dissonant} if there is a countable ordinal $\ga$ such that $a$ is dissonant at all larger countable ordinals. The idea of the proof is to 
force with a $\gs$-closed forcing $P$ a tree $U\subset 2^{<\gw_1}$ such that every infinite set $a\subset 2^{\gw_1}$ of its uncountable branches is eventually dissonant. At the same time, in the $P$-extension the Namba forcing will add a countably infinite set of ground model branches through $U$ which is not eventually dissonant. 

Towards the definition of the poset $P$, a condition will be a tuple $p=\langle \ga_p, e_p, f_p, a_p, b_p\rangle$ such that

\begin{itemize}
\item $\ga_p$ is a countable ordinal, $e_p\subset\gw_2$ is a countable set;
\item $f_p\colon\ga_p\times e_p\to 2$ is a function;
\item $a_p$ is a countable subset of $2^{\leq\ga_p}$ such that for no ordinal $\gd\in e_p$ and no $t\in a_p$ it is the case that for all $\gb\in\dom(t)$, $t(\gb)=f(\gb, \gd)$;
\item $b_p$ is a countable collection of infinite subsets of $e_p$.
\end{itemize}

\noindent The ordering is defined by $q\leq p$ just in case $\ga_p\leq \ga_q$, $e_p\subseteq e_q$, $f_p\subseteq f_q$, $a_p\subseteq a_q$, $b_p\subseteq b_q$, and for every ordinal $\gb\in\ga_q\setminus\ga_p$ and every set $a\in b_p$ there is $\gd\in a$ such that $f(\gb, \gd)=1$.

The idea behind the definition of the poset $P$ is the following. If $G\subset P$ is a generic filter, let $f\colon\gw_1\times\gw_2\to 2$ be the union $\bigcup_{p\in G}f_p$. Also, let $U\subset 2^{<\gw_1}$ be the collection of all countable binary sequences $t$ such that for some $\gd\in\gw_2$, $\forall\gb\in\dom(t)\ t(\gb)=f(\gb, \gd)$. Thus, $U$ is a tree and the functions $b_\gd=f(\cdot, \gd)$ are its branches. The $a_p$ coordinate is inserted to make sure that the tree $U$ has no other branches. The $b_p$ coordinate is inserted to make sure that any infinite set of branches of $U$ is eventually dissonant. The genericity of the whole construction will imply that in $V[G]$, the Namba forcing $\namba$ forces the set $\{b_{\dot x(n)}\colon n\in\gw\}$ to be consonant at uncountably many ordinals, where $\dot x$ is the $\namba$-name for its generic sequence of ordinals. I now proceed to verify the features of the poset $P$ one by one.

\begin{claim}
The poset $P$ is $\gs$-closed and $\aleph_2$-c.c.
\end{claim}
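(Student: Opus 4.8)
The plan is to handle the two properties separately, in each case passing to the natural ``union'' of the given conditions and verifying that it lies in $P$.

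For $\gs$-closure, given a descending sequence $\langle p_n:n\in\gw\rangle$, I would set $\ga_q=\sup_n\ga_{p_n}$, $e_q=\bigcup_n e_{p_n}$, $f_q=\bigcup_n f_{p_n}$, $a_q=\bigcup_n a_{p_n}$, and $b_q=\bigcup_n b_{p_n}$, and argue that $q=\langle\ga_q,e_q,f_q,a_q,b_q\rangle$ is a condition below every $p_n$. Because the sequence is descending the $f_{p_n}$ cohere, so $f_q$ is a well-defined total function on $\ga_q\times e_q$, and all the countability and typing requirements are immediate. The one delicate clause is the defining property of the $a$-coordinate: given $t\in a_q$ and $\gd\in e_q$, I would choose $n$ so large that simultaneously $t\in a_{p_n}$ and $\gd\in e_{p_n}$ (possible since both sequences are increasing), and then read off from the fact that $p_n$ is a condition that $t$ does not follow $f_{p_n}(\cdot,\gd)=f_q(\cdot,\gd)$ on its domain. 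To see $q\leq p_n$, the only nonvacuous point is the last clause of the ordering: for $\gb\in\ga_q\setminus\ga_{p_n}$ I would pick $m>n$ with $\gb<\ga_{p_m}$ and invoke $p_m\leq p_n$ to produce, for each $a\in b_{p_n}$, some $\gd\in a$ with $f_{p_m}(\gb,\gd)=1$, a value that persists in $f_q$.

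For the $\aleph_2$-chain condition I would start from a family $\{p_\xi:\xi<\gw_2\}$ and thin it out until any two members amalgamate. Using the Continuum Hypothesis to secure the relevant arithmetic ($\gw_1^{\anought}=\aone<\atwo$), I would first apply the $\Delta$-system lemma to the countable sets $\{e_{p_\xi}\}$ to obtain an $\aleph_2$-sized subfamily whose $e$-coordinates form a $\Delta$-system with some root $r$. Then I would refine three more times by pigeonhole, each time staying at size $\atwo$ (which survives because $\atwo$ is regular and each partition has at most $\aone$ classes): first so that $\ga_{p_\xi}$ is a fixed ordinal $\ga$ (only $\aone$ values), then so that $f_{p_\xi}\restriction(\ga\times r)$ is a fixed function $g$ (only $2^{\anought}=\aone$ possibilities), and finally so that $a_{p_\xi}$ is a fixed countable set $a$ (only $\aone^{\anought}=\aone$ possibilities). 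It then remains to amalgamate any two conditions $p,q$ from the thinned family by setting $\ga_s=\ga$, $e_s=e_p\cup e_q$, $f_s=f_p\cup f_q$, $a_s=a$, and $b_s=b_p\cup b_q$. Here $f_s$ is genuinely a function because $e_p\cap e_q=r$ and both conditions agree with $g$ on $\ga\times r$; the clause for $a_s$ holds automatically because every $\gd\in e_s$ lies in $e_p$ or $e_q$ and $a=a_p=a_q$ avoids the corresponding branch either way; and $s\leq p$, $s\leq q$ hold since $\ga_s=\ga_p=\ga_q$ makes the height-extension clause vacuous.

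The main obstacle is precisely the $a$-coordinate in the chain-condition argument. The naive union $a_p\cup a_q$ need not be legal, since a sequence in $a_p$ could happen to be an initial segment of one of the ``new'' branches $f_q(\cdot,\gd)$ with $\gd\in e_q\setminus r$, and no pigeonhole on $e$, $\ga$, or $f\restriction(\ga\times r)$ rules this out. Freezing $a$ across the entire subfamily is exactly what neutralizes the difficulty, and I expect the analogous coherence of the $a$-coordinate across the tail of the sequence to be the point requiring care in the $\gs$-closure argument as well.
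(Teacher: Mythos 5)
Your proof is correct and follows essentially the same route as the paper: coordinatewise unions witness $\gs$-closure, and your $\Delta$-system-plus-pigeonhole thinning produces exactly the paper's compatibility criterion ($\ga_p=\ga_q$, $a_p=a_q$, and $f_p$, $f_q$ agreeing on $\ga_p\times(e_p\cap e_q)$), after which the coordinatewise union amalgamates. You have merely written out in full the details the paper dismisses as ``a standard $\Delta$-system argument using the CH assumption,'' including the key observation that the $a$-coordinate must be frozen across the family.
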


\begin{proof}
For the $\gs$-closedness, for every countable descending sequence of conditions its coordinatewise union is its lower bound.
For $\aleph_2$-c.c., note that if conditions $p, q\in P$ satisfy $\ga_p=\ga_q$, $a_p=a_q$, and $f_p\restriction (\ga_p\times e_p\cap e_q)=f_q\restriction (\ga_p\times e_p\cap e_q)$ then $p, q$ are compatible--their coordinatewise union will be their lower bound. The proof of $\aleph_2$-c.c.\ is completed by a standard $\Delta$-system argument using the CH assumption.
\end{proof}

\begin{claim}
$P$ forces the domain of $\bigcup_{p\in G}f_p$ to be equal to $\gw_1\times\gw_2$ and the set $\bigcup_{p\in G}b_p$ to be equal to $[\gw_2]^{\aleph_0}$.
\end{claim}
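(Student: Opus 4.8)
The plan is to prove both equalities by density. For $\gb<\gw_1$ and $\gd<\gw_2$ put $D_{\gb,\gd}=\{q\in P:\gb<\ga_q\ \text{and}\ \gd\in e_q\}$, and for $a\in[\gw_2]^{\aleph_0}$ put $E_a=\{q\in P:a\in b_q\}$. The inclusions $\dom(\bigcup_{p\in G}f_p)\subseteq\gw_1\times\gw_2$ and $\bigcup_{p\in G}b_p\subseteq[\gw_2]^{\aleph_0}$ hold in every extension, directly from the definition of a condition: $\dom(f_q)=\ga_q\times e_q$ with $\ga_q<\gw_1$ and $e_q\subseteq\gw_2$, and each member of $b_q$ is an infinite subset of the countable set $e_q$. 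So it suffices to prove that every $D_{\gb,\gd}$ and every $E_a$ is dense, and then appeal to genericity for the reverse inclusions.

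Below a given condition $p$, the two routine moves are \emph{raising the height} and \emph{enlarging $b$}. To reach height $\gg>\ga_p$ I would set $\ga_q=\gg$ and extend $f_p$ to $\ga_q\times e_p$ by putting $f_q(\gb,\gd)=1$ for all new $\gb\in[\ga_p,\gg)$ and all $\gd\in e_p$. This verifies the ordering requirement at a stroke, since every $a\in b_p$ is a nonempty subset of $e_p$ and hence contains some $\gd$ with $f_q(\gb,\gd)=1$; and it preserves validity automatically, because each $t\in a_p$ has $\dom(t)\leq\ga_p$, so the matching of $t$ against any branch $f(\cdot,\gd)$ involves only coordinates below $\ga_p$, which are left untouched. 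To force a prescribed $a\in[\gw_2]^{\aleph_0}$ into the $b$-coordinate I would keep $\ga_q=\ga_p$ (making the ordering clause vacuous), arrange $a\subseteq e_q$, and set $b_q=b_p\cup\{a\}$.

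Both density statements therefore come down to the one substantial move, \emph{adjoining a new column} $\gd_0\in\gw_2\setminus e_p$: I must choose a branch $f_q(\cdot,\gd_0)\colon\ga_q\to 2$ so that, keeping $a_q=a_p$, the tuple $q$ remains a condition, i.e.\ so that $f_q(\cdot,\gd_0)\restriction\dom(t)\neq t$ for every $t\in a_p$. When $e_p\neq\emptyset$ I would simply \emph{copy an existing branch}: fix $\gd_1\in e_p$ and let $f_q(\cdot,\gd_0)$ be $f_p(\cdot,\gd_1)$, extended to height $\ga_q$ if the height was also raised. Since $p$ is a condition, $f_p(\cdot,\gd_1)$ avoids every $t\in a_p$, so its copy does as well, and $q$ is a condition below $p$ with $\gd_0\in e_q$.

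The step I expect to be the real obstacle is exactly this branch-avoidance. A countable $a_p$ can in principle exhaust $2^{\ga_q}$---for instance if $a_p$ contains both length-one sequences---so that no admissible branch exists at all, and the copying trick evades this only as long as some branch is already present. My plan is therefore to attempt a reduction to conditions with $e_p\neq\emptyset$ (a first column can be adjoined below the trivial condition, where $a_p$ is empty and blocks nothing, and every subsequent column is then produced by copying). I anticipate that the genuine difficulty lies precisely here: one must rule out, or prove irrelevant to the density of $D_{\gb,\gd}$ and $E_a$, the degenerate conditions whose $a_p$ already swallows every branch. Once that point is settled, the three moves above compose to show each $D_{\gb,\gd}$ and each $E_a$ dense, which is what the claim asserts.
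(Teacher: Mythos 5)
Your three moves are exactly the ``elementary density argument'' that the paper's one-sentence proof has in mind, and their mechanics are sound: raising the height with constant value $1$ satisfies the ordering clause for every $a\in b_p$ (each such $a$ is an infinite, hence nonempty, subset of $e_p$) and cannot injure the $a_p$-requirement, since every $t\in a_p$ has $\dom(t)\leq\ga_p$; adding a prescribed $a\in[\gw_2]^{\aleph_0}$ to $b_q$ with $\ga_q=\ga_p$ is free once $a\subseteq e_q$; and new columns can be produced by copying an existing one, which inherits avoidance of $a_p$ (for $E_a$ note that $a\setminus e_p$ may be countably infinite, but copying the same old column into each new $\gd$ handles this). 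You are also right that branch-avoidance is the only nontrivial point. However, your plan for settling it cannot work as stated. Density is quantified over \emph{all} conditions, so a ``reduction to the trivial condition'' is not available: the tuple $p$ with $\ga_p=1$, $e_p=\emptyset$, $f_p=\emptyset$, $a_p=\{\langle 0\rangle,\langle 1\rangle\}$, $b_p=\emptyset$ satisfies the letter of the definition (the clause restricting $a_p$ is vacuous when $e_p=\emptyset$), and below it no extension can ever place any ordinal $\gd$ into $e_q$, since $f_q(0,\gd)$ would have to differ from both $\langle 0\rangle$ and $\langle 1\rangle$ on the coordinate $0$. The same happens if $a_p=\{\emptyset\}$. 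So the degenerate conditions you worried about really exist, the sets $D_{\gb,\gd}$ are literally not dense, and the claim as printed fails below such a condition; this cannot be ``proved irrelevant'' within the poset as literally defined.

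The repair is harmless and is surely what the author intends: either require $e_p\neq\emptyset$ in the definition of a condition (equivalently, work below a fixed condition carrying at least one column), after which your copying trick yields full density for all three moves and hence the claim; or observe that every construction in the rest of the paper (coordinatewise unions along filters generic over a countable elementary submodel, adding $t$ to $a_r$, adding $c$ to $b_p$, extending $f_q$ by one row) starts from and preserves conditions with $e_p\neq\emptyset$, so the degenerate conditions never arise where the claim is used --- note in particular that the step ``strengthening $p$ so that $c\subset e_p$'' in the proof of Claim~\ref{iclaim} invokes exactly the column-adding density and thus also needs this repair. In short: your proposal is the argument the paper leaves unspelled, correctly worked out, and the one step you flagged as the genuine difficulty is a genuine (if minor and easily fixed) defect of the stated definition rather than something your reduction strategy could have circumvented.
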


\begin{proof}
An elementary density argument.
\end{proof}

\begin{claim}
The poset $P$ forces that every branch through the tree $U$ is on the list $\{b_\gd\colon\gd\in\gw_2\}$.
\end{claim}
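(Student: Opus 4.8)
The plan is to use the $a$-coordinate exactly as advertised: committing a node to it permanently excludes that node from the tree $U$. So the first step is the auxiliary fact that if $q\in G$ and $t\in a_q$, then $t\notin U$ in $V[G]$. To see this, fix any $\gd\in\gw_2$. By an elementary density argument (one may always enlarge the $e$-coordinate) the set of $r\leq q$ with $\gd\in e_r$ is dense below $q$, so some such $r$ lies in $G$. Since $t\in a_q\subseteq a_r$ and $r$ is a legitimate condition, the defining requirement on the $a$-coordinate yields some $\gb\in\dom(t)$ with $t(\gb)\neq f_r(\gb,\gd)=f(\gb,\gd)$. As $\gd$ was arbitrary, $t$ disagrees with every $b_\gd$ on its domain, which is precisely the statement $t\notin U$.

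With this in hand I would argue by contradiction. Suppose the claim fails; then there are a condition $p$ and a name $\dot x$ with $p\Vdash$``$\dot x$ is a branch through $U$ and $\dot x\neq b_\gd$ for every $\gd\in\gw_2$''. The goal is to produce $q\leq p$ and a sequence $s\in 2^{\ga_q}$ with $q\Vdash\dot x\restriction\ga_q=s$, where $s$ disagrees, somewhere below $\ga_q$, with $f_q(\cdot,\gd)$ for every $\gd\in e_q$. The two ingredients are $\gs$-closure and the following density fact, read off directly from $p\Vdash\dot x\neq b_\gd$: for each $r\leq p$ and each $\gd\in e_r$ there is $r'\leq r$ deciding an initial segment $\dot x\restriction\gg$ (some $\gg<\ga_{r'}$) that already disagrees with $f_{r'}(\cdot,\gd)$.

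Using these I would build a descending $\gw$-sequence $p=p_0\geq p_1\geq\cdots$ together with decided initial segments $s_n$ of $\dot x$, at each step both lengthening the decided part of $\dot x$ and, following a fair dovetailing of the ordinals that ever enter the $e$-coordinates, invoking the density fact to force a disagreement between $\dot x$ and the next $b_\gd$. Since $P$ is $\gs$-closed, the coordinatewise union $q$ is a lower bound, with $\ga_q=\sup_n\ga_{p_n}$, $e_q=\bigcup_n e_{p_n}$, and $q\Vdash\dot x\restriction\ga_q=s$ for $s=\bigcup_n s_n$; the dovetailing guarantees that $s$ disagrees with $f_q(\cdot,\gd)$ below $\ga_q$ for every $\gd\in e_q$. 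Hence the tuple $q'$ obtained from $q$ by replacing $a_q$ with $a_q\cup\{s\}$ is again a condition and $q'\leq q\leq p$. Now $q'\Vdash s=\dot x\restriction\ga_q\in U$, because $\dot x$ is forced to be a branch and $U$ is closed under initial segments; but $s\in a_{q'}$, so the auxiliary fact gives $q'\Vdash s\notin U$, a contradiction.

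The only genuinely delicate point is the bookkeeping inside the $\gw$-construction: as one decides longer initial segments of $\dot x$, the deciding conditions may throw new ordinals into the $e$-coordinate, and every one of them must be diagonalized against before stage $\gw$. This is handled by the standard device of enumerating the tasks ``disagree with the $j$-th element of $e_{p_i}$'' along a surjection $\gw\to\gw\times\gw$, so that each $\gd\in e_q$, entering at some finite stage, is processed at a later finite stage; once a disagreement with $b_\gd$ appears below some $\ga_{p_n}$ it is preserved in $s$. Everything else is routine, and it is worth noting that neither CH nor the $\aleph_2$-c.c.\ is needed here—only $\gs$-closure and genericity.
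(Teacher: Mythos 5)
Your proof is correct and takes essentially the same route as the paper: decide an initial segment of the putative unlisted branch while diagonalizing so that it disagrees with $f(\cdot,\gd)$ for every $\gd$ entering the $e$-coordinate, and then seal the resulting sequence $s$ into the $a$-coordinate, which by the defining constraint on conditions forces $s\notin U$. The paper merely automates your dovetailing bookkeeping by taking the coordinatewise union of an $M$-generic filter over a countable elementary submodel $M$, whose genericity simultaneously decides $\dot c\restriction(M\cap\gw_1)$ and guarantees the disagreement with $f_q(\cdot,\gd)$ for every $\gd\in M\cap\gw_2$ in one stroke.
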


\begin{proof}
Suppose towards a contradiction that this fails and let $p\in P$ force $\dot c$ to be an unlisted a branch through $\dot U$.
Let $M$ be a countable elementary submodel of a large structure and $g\subset P\cap M$ be a filter generic over $M$, containing the condition $p$. It is immediate that the coordinatewise union $q$ of all conditions in the filter is again a condition in the poset $P$,
stronger than all conditions in the filter $p$, in particular $q\leq p$. Let $t=\dot c/g$. The genericity of the filter $g$ plus the assumption on the name $\dot c$ imply that $\ga_q=M\cap \gw_1$, $t\in 2^{\ga_q}$ and for no $\gd\in M\cap\gw_2$ it is the case that for all $\gb\in\ga_q$, $t(\gb)=f_q(\gb, \gd)$. Thus, the condition $r$ obtained from $q$ by simply adding $t$ into $a_r$, is indeed an element of the poset $P$, and $r\leq q\leq p$. At the same time, $r\Vdash\check t\subset\dot c$ and $\check t\notin\dot U$, contradicting the assumption that $\dot c$ was forced to be a branch through $\dot U$.
\end{proof}

\begin{claim}
\label{iclaim}
$P$ forces that every infinite collection of branches of $U$ is eventually dissonant.
\end{claim}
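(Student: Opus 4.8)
The plan is to reduce the statement to \emph{countably} infinite collections of branches and then read the conclusion directly off the $b_p$ coordinate and the definition of the ordering of $P$. First I would record that dissonance is monotone under inclusion: if $a_0\subseteq a_1$ are sets of branches and $a_0$ is dissonant at $\gb$, then some branch lying in $a_0\subseteq a_1$ takes the value $1$ at $\gb$, so $a_1$ is dissonant at $\gb$ as well. Hence if a subcollection is eventually dissonant, so is the whole collection. By the preceding claim every branch of $U$ has the form $b_\gd=f(\cdot,\gd)$ for some $\gd\in\gw_2$, so an infinite collection of branches is indexed by an infinite set $A\subseteq\gw_2$; choosing a countably infinite $A_0\subseteq A$ that indexes pairwise distinct branches, it suffices to prove that $\{b_\gd\colon\gd\in A_0\}$ is eventually dissonant.

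Since $P$ is $\gs$-closed, it adds no new $\gw$-sequences of ordinals, so the countable set $A_0$ already belongs to the ground model $V$. By the claim computing $\bigcup_{p\in G}b_p=[\gw_2]^{\aleph_0}$, there is then a condition $p\in G$ with $A_0\in b_p$.

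I would finish by showing that $\{b_\gd\colon\gd\in A_0\}$ is dissonant at every countable ordinal $\gb\geq\ga_p$, which is exactly eventual dissonance. Fix such a $\gb$. Since $f=\bigcup_{q\in G}f_q$ has domain all of $\gw_1\times\gw_2$, there is $q\in G$ with $\gb\in\ga_q$, and by directedness of $G$ we may take $q\leq p$, so that $\gb\in\ga_q\setminus\ga_p$. Applying the defining condition of $q\leq p$ to the set $A_0\in b_p$ produces $\gd\in A_0$ with $f_q(\gb,\gd)=1$, that is, $b_\gd(\gb)=1$; hence the collection is dissonant at $\gb$.

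The whole argument is bookkeeping once one sees that the ordering of $P$ was designed precisely to enforce this dissonance. The only delicate point, and the step I expect to be the main obstacle, is the reduction to a countable subcollection that lives in $V$: an infinite family of branches may be genuinely uncountable and freshly added by the forcing, so that no single countable coordinate $b_p$ contains it. Monotonicity of dissonance is exactly what lets us bypass this, replacing the family by a countable subset that $\gs$-closure returns to the ground model and that the density computation places inside some $b_p$.
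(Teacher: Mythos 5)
Your proof is correct and takes essentially the same route as the paper: reduce to a countably infinite subfamily whose index set lies in the ground model by $\gs$-closure, get that set into some $b_p$, and read off dissonance at every $\gb\geq\ga_p$ directly from the definition of the ordering of $P$. The only cosmetic difference is that you argue semantically in $V[G]$ by invoking the earlier claim that $\bigcup_{p\in G}b_p=[\gw_2]^{\aleph_0}$, whereas the paper runs the corresponding density step directly below an arbitrary condition (adding $c$ to $e_p$ and then to $b_p$); your explicit monotonicity-of-dissonance observation simply fills in the paper's implicit reduction to countable collections.
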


\begin{proof}
It is of course only necessary to treat countably infinite collections of branches of $U$. By the previous claim, such collections are equal to $\{b_\gd\colon \gd\in c\}$ for some countable set $c\subset\gw_2$, and by the $\gs$-closure of the poset $P$, the set $c$ must be in the ground model. Let $p\in P$ be a condition. Strengthening it if necessary, I may assume that $c\subset e_p$ holds. Now, let $q\leq p$ be a condition obtained from $p$ by simply adding $c$ to $b_p$. It is immediate that $q\leq p$ is a condition and $q\Vdash c$ is dissonant at every ordinal larger than $\ga_p$.
\end{proof}

Move to the $P$-generic extension $V[G]$. Consider the Namba forcing $\namba$, its name $\dot x$ for a generic $\gw$-sequence of ordinals cofinal in $\gw_2^V$, and its name $\dot d$ for the infinite collection $\{b_{\dot x(n)}\colon n\in\gw\}$ of branches of $U$. The following claim will conclude the proof of the theorem.

\begin{claim}
\label{iiclaim}
In $V[G]$, the Namba forcing forces $\dot d$ not to be eventually dissonant.
\end{claim}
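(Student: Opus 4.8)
The plan is to verify the claim through the forcing theorem for $\namba$, reducing it to a single combinatorial statement about the generic function $f$. Working in $V[G]$, it suffices to show that for every Namba condition $T$ and every countable ordinal $\ga$ the set of $T'\leq T$ forcing ``$\dot d$ is consonant at some level above $\ga$'' is dense; since $\ga$ ranges over all of $\gw_1$ and $\namba$ preserves $\gw_1$, this yields $\namba\Vdash\dot d$ is not eventually dissonant. Now $T'$ forces $\dot d$ to be consonant at a fixed level $\gb$ exactly when every ordinal appearing on $T'$ lies in the set $Z_\gb=\{\gd\in\gw_2\colon f(\gb,\gd)=0\}$. Thus the whole claim comes down to the following: \emph{for every Namba tree $T$ and every $\ga<\gw_1$ there is $\gb>\ga$ such that $Z_\gb$ is positive below $T$}, meaning $T$ can be pruned to a Namba subtree all of whose ordinals lie in $Z_\gb$ (so that at every node $\aleph_2$ of the successors fall into $Z_\gb$).

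Before attacking this I would record the structural asymmetry that makes it possible. Every branch $x\in[T]$ that lies in $V[G]$ produces the $V[G]$-set $\{b_{x(n)}\colon n\in\gw\}$ of branches of $U$, which is eventually dissonant by Claim \ref{iclaim}; hence no ground-model branch of $T$ is consonant cofinally, and the consonance we seek can only come from the \emph{genericity} of the Namba branch $\dot x$ over $V[G]$. Equivalently, the set $c=\{\dot x(n)\colon n\in\gw\}$ is never inserted into the $b$-coordinate of any condition, and being cofinal in $\gw_2$ it meets every bounded ground-model set finitely, so it escapes the mechanism of Claim \ref{iclaim} entirely. The task is to convert this escape into genuine cofinal consonance.

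To do this I would argue in $V$ with the name. Suppose toward a contradiction that some $p\in P$ and a $P$-name $\dot T$ for a Namba tree force that $\dot d$ is dissonant at every level above $\check\ga$, i.e.\ $p$ forces that $Z_\gb$ is \emph{not} positive below $\dot T$ for all $\gb>\ga$. Fix a countable $M\prec H_\gq$ containing $p,\dot T,\ga,P$, put $\gb^*=M\cap\gw_1$ (so $\gb^*>\ga$), and use $\gs$-closure to build an $M$-generic descending sequence below $p$ whose coordinatewise union is a condition $q$ with $\ga_q=\gb^*$. The point of this construction is that the only sets which can constrain the fresh level $\gb^*$ through the ordering of $P$ below $q$ are the members of $b_q=\bigcup_n b_{p_n}$, a \emph{countable} family all of whose elements are subsets of $\sup(M\cap\gw_2)$; a single countable set $H$ hits them all. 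I would then extend $q$ to define level $\gb^*$, setting $f(\gb^*,\gd)=0$ for the trunk of $\dot T$ and for every relevant $\gd$ outside $H$, and use that $p$ forces every node of $\dot T$ to have $\aleph_2$ immediate successors, together with the $\aleph_2$-c.c.\ of $P$, to locate at each node $\aleph_2$-many successors above $\sup(M\cap\gw_2)$ that can be consonantly assigned value $0$ at level $\gb^*$. Pruning $\dot T$ to these successors produces a Namba subtree inside $Z_{\gb^*}$, which would contradict non-positivity at $\gb^*>\ga$.

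The main obstacle is precisely this last reconciliation. A condition of $P$ decides $f$ on only countably many columns, so it cannot by itself force $\aleph_2$-many successors of a node into $Z_{\gb^*}$; and a priori the generic could still drag ordinals above $\sup(M\cap\gw_2)$ into the complement of $Z_{\gb^*}$ in order to satisfy $b$-sets introduced later, at stages $\leq\gb^*$, using fresh ordinals. The heart of the proof is therefore the assertion that the $\aleph_2$-c.c.\ name $\dot T$ cannot ``predict'' the generic complement of $Z_{\gb^*}$ accurately enough to bury an entire subtree inside it; this is exactly what the passage to the $M$-generic union $q$ is meant to secure, by guaranteeing that below $q$ the value of $f$ at level $\gb^*$ is governed only by the countably many $b$-sets belonging to $M$, leaving cofinally many successors of every node free to receive the value $0$. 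Arranging this freedom simultaneously across all $\aleph_2$ successors of all nodes of $\dot T$ is the delicate combinatorial step, and is where I expect the bulk of the work to lie.
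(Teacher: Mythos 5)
Your reduction of the claim to the density statement---for every Namba tree $T$ and every $\ga<\gw_1$ there is $\gb>\ga$ such that $T$ can be pruned to a Namba subtree inside $Z_\gb$---matches what the paper actually establishes, and your ground-model machinery (the $M$-generic coordinatewise union $q$, and the observation that every set in $b_q$ lies in $M$, is bounded in $M\cap\gw_2$, and so meets any set cofinal in $M\cap\gw_2$ finitely) is exactly the second half of the paper's argument. But the step you defer to ``the bulk of the work''---consonantly assigning the value $0$ at level $\gb^*$ to $\aleph_2$-many successors of every node of $\dot T$ and pruning---is not a delicate step awaiting elaboration; it is where your plan breaks and where the paper substitutes a genuinely different idea. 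A condition of $P$ has countable $e_r$ and so decides $f(\gb^*,\cdot)$ on only countably many columns, while the statement you are trying to refute (``$Z_{\gb^*}$ is not positive below $T$'') concerns an object of size $\aleph_2$ in $V[G]$; no single condition can force an entire Namba subtree of the name $\dot T$ into $Z_{\gb^*}$, and the values $f(\gb^*,\gd)$ for $\gd\notin e_r$ are settled generically by later conditions, which you cannot freely steer to $0$ (flipping a value on a column can, among other things, make that column realize a sequence placed into the $a$-coordinate to kill branches). So the contradiction you aim for is simply not reachable by one condition, which is what your own closing paragraph concedes.

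The paper's missing ingredient is game-theoretic. For each $\gb>\ga$ it introduces the game $H_\gb$ in which the players build an increasing sequence $\gg_0<\gd_0<\gg_1<\gd_1<\dots$ in $\gw_2$, Player II winning if $\langle\gd_n\colon n\in\gw\rangle$ is a branch of $T$ with $f(\gb,\gd_n)=0$ for all $n$. This game is closed for Player II, hence determined; a winning strategy for II at some $\gb>\ga$ yields by a standard argument a Namba subtree of $T$ inside $Z_\gb$, i.e.\ exactly your positivity. So under the contradiction hypothesis Player I wins every $H_\gb$, and since playing larger ordinals only helps Player I, the $\aleph_1$-many strategies amalgamate into a single $\gs$ winning all the games at once, which by the $\aleph_2$-c.c.\ of $P$ may be taken in the ground model (any ground-model function dominating it is still winning). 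This reformulation is precisely what makes your $M$-generic construction sufficient: refuting ``\,$\gs$ wins $H_{\gb^*}$\,'' requires exhibiting only a \emph{single} counterplay, namely one branch $\langle\gd_n\colon n\in\gw\rangle$ of $\dot T/g$ cofinal in $M\cap\gw_2$, and setting $f_r(\gb^*,\gd_n)=0$ for these countably many ordinals while putting value $1$ into each set of $b_q$ is possible exactly by your finite-intersection observation. In short, one condition cannot contradict non-positivity, but it can defeat a strategy; the determined game is the device that trades the $\aleph_2$-sized obligation for a countable one, and it is absent from your proposal.
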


\begin{proof}
Suppose towards a contradiction that this fails and $T\in\namba$ is a tree and $\ga\in\gw_1$ is an ordinal such that $T\Vdash\dot d$ is dissonant everywhere past $\check\ga$. For simplicity assume that $T$ has empty trunk. For every ordinal $\gb>\ga$ consider a game $H_\gb$ in which Players I and II alternate, creating an increasing sequence $\gg_0<\gd_0<\gg_1<\gd_1<\dots$ of ordinals in $\gw_2$ and Player II wins if his ordinals $\langle \gd_n\colon n\in\gw\rangle$ form a branch through the tree $T$ and for all $n\in\gw$ it is the case that $f(\gb, \gd_n)=0$. The game $H_\gb$ is closed for Player II and therefore determined. If for some ordinal $\gb>\ga$ Player II has a winning strategy, then a standard argument yields a Namba tree $S\subset T$ such that for every ordinal $\gd$ appearing on it, $f(\gb, \gd)=0$. Such a tree $S$ necessarily forces $\dot a$ to be consonant at $\gb$, contradicting the choice of the tree $T$. Thus, it will be enough to derive a contradiction from the assumption that Player I has a winning strategy in the game $H_\gb$ for every ordinal $\gb>\ga$. Note the larger ordinals Player I plays, the better for him. It follows that all of these $\aleph_1$ many strategies for Player I can be compounded into a single one that wins in all the games simultaneously. It also follows from $\aleph_2$-c.c.\ of the poset $P$ that such a universal strategy exists already in the ground model. 

For the rest of the proof, move back to the ground model, let $p\in P$ be a condition, $\dot T$ a $P$-name for the tree $T$ from the previous paragraph, and $\gs$ a strategy for which $p\Vdash\check\gs$ is winning for all the games $H_\gb\colon \gb>\check\ga$ for Player I. I will find a condition $r\leq p$, an ordinal $\gb$ with $\ga<\gb<\ga_r$ and a counterplay $\gg_0<\gd_0<\gg_1<\dots$ against the strategy $\gs$ such that for all $n\in\gw$, $f_r(\gb, \gd_n)=0$ and $r\Vdash\langle\gd_n\colon n\in\gw\rangle$ is a branch through the tree $\dot T$. So, $r$ forces that with this counterplay, Player II won in the game $H_\gb$ against the strategy $\gs$, yielding the final contradiction. 

To construct the condition $r$, let $M$ be a countable elementary submodel of a large structure, let $g\subset P\cap M$ be a generic filter over $M$, and let $q$ be a coordinatewise union of all conditions in $g$. It is immediate that $q$ is a condition stronger than all conditions in $g$, in particular $q\leq p$. Look at the evaluation $\dot T/g$ in the model $M[g]$. Clearly, $M[g]\models \dot T/g$ is a tree in which every node has $\aleph_2$ many immediate successors. Thus, it is possible to find a a counterplay $\gg_0<\gd_0<\gg_1<\dots$ against the strategy $\gs$ such that the sequence $\langle \gd_n\colon n\in\gw\rangle$ is a branch through the tree $\dot T/g$ and cofinal in $M\cap\gw_2$. It follows that $q\Vdash\langle \gd_n\colon n\in\gw\rangle$ is a branch through $\dot T$.

Now, consider a condition $r\leq q$ which is obtained from $q$ by letting $\ga_r=\ga_q+1$, $e_r=e_q$, $f_p\subset f_r$, $a_r=a_q$ and $b_r=b_q$ and such that $f_r(\ga_q, \gd_n)=0$ for all $n\in\gw$. Such a condition $r$ will have the required properties with the ordinal $\gb=\ga_q$. It is necessary to verify that such a condition exists. To see that, observe that each ordinal $\gd_n$ belongs to the set $e_q=M\cap \gw_2$ by the $M$-genericity of the filter $g$. Observe also that $b_q$ is just the set of all countably infinite subsets of $\gw_2$ which belong to the model $M$ by the genericity of the filter $g$. Thus, each set in $b_q$ is bounded in $M\cap\gw_2$ and therefore has finite intersection with the set $\{\gd_n\colon n\in\gw\}$ it immediately follows that the function $f_q$ can be extended to $f_r$ on the domain $(\ga_q+1)\times e_q$ so that the last demand on $f_r$ is satisfied.
\end{proof}

To conclude the proof, move to the $P$-extension. Note that the tree $U$ has size $\aleph_1$ and so $U\in H_{\aleph_2}$. The statement ``any infinite collection of branches of $U$ is eventually dissonant'' is $\Pi_1$ in the parameter $U$ in the structure $\langle H_{\aleph_2}, \in, \gw_1\rangle$, it holds in the $P$-extension by Claim~\ref{iclaim} while it fails in the further Namba extension by Claim~\ref{iiclaim}. This shows that in the $P$-extension, the bounded forcing axiom for Namba forcing fails.

\bibliographystyle{plain} 
\bibliography{odkazy}

\end{document}